\newtheorem{thm}{Theorem}[section]
\newtheorem*{thm*}{Theorem}
\newtheorem{lemma}[thm]{Lemma}
\newtheorem{prop}[thm]{Proposition}
\theoremstyle{definition}
\newtheorem*{claim*}{Claim}
\newtheorem{example}{Example}[section]
\numberwithin{equation}{section}
\newtheorem*{acknowledgement}{Acknowledgement}
\newtheorem*{statements}{Statements and Declarations}
\begin{document}

\title[]{
A central limit theorem with explicit Lyapunov exponent and variance for products of $2\times2$ random non-invertible matrices}

\author[Benson]{Audrey Benson{$^\dag$}}
\thanks{\footnotemark {$\dag$} Research was supported in part by NSF Grant DMS-2316968.}
	\address{Department of Mathematics\\
		Union College\\
		Schenectady, NY 12308,  U.S.A.}
	\email{bensona2@union.edu }
	
\author[Gould]{Hunter Gould{$^\dag$}}
	\address{Department of Mathematics\\
		Union College\\
		Schenectady, NY 12308,  U.S.A.}
	\email{gouldh2@union.edu }	
	
\author[Mariano]{Phanuel Mariano{$^\dag$}}
	\address{Department of Mathematics\\
		Union College\\
		Schenectady, NY 12308,  U.S.A.}
	\email{marianop@union.edu}

\author[Newcombe]{Grace Newcombe{$^\dag$}}
	\address{Department of Mathematics\\
		Union College\\
		Schenectady, NY 12308,  U.S.A.}
	\email{newcombg@union.edu }

\author[Vaidman]{Joshua Vaidman{$^\dag$}}
	\address{Department of Mathematics\\
		Union College\\
		Schenectady, NY 12308,  U.S.A.}
	\email{vaidmanj@union.edu }

\keywords{products of random matrices, Lyapunov exponent, central limit theorem, CLT,  $m$-dependent sequences, non-invertible matrices, singular matrices}
\subjclass{Primary 37H15, 60F05; Secondary  60B20, 60G10, 34F05}


\begin{abstract}  

The theory of products of random matrices and  Lyapunov exponents have been widely studied and applied in the fields of biology, dynamical systems, economics,  engineering and statistical physics.   
We consider the product of an i.i.d. sequence of $2\times 2$ random non-invertible matrices with real entries. Given some mild moment assumptions we prove an explicit formula for the Lyapunov exponent and prove a central limit theorem with an explicit formula for the variance in terms of the entries of the matrices. We also give examples where exact values for the Lyapunov exponent and variance are computed. An important example where non-invertible matrices are essential is the random Hill's equation, which has numerous physical applications, including the astrophysical orbit problem.

\end{abstract}

\maketitle

\tableofcontents

\section{Introduction and Main Results}


Let $Y_{1},Y_{2},Y_{3},\dots$ be i.i.d. random matrices from the
ring $M_{2}\left(\mathbb{R}\right)$ of $2\times2$ matrices with
real entries. Let $S_{n}=Y_{n}\cdots Y_{2}Y_{1}$ to be their matrix
product and define the (top) \textbf{Lyapunov exponent} to be 
\begin{equation}
\lambda:=\lim_{n\to\infty}\frac{\mathbb{E}\left[\log\left\Vert S_{n}\right\Vert \right]}{n},\label{eq:Lyapunov-def}
\end{equation}
where $\left\Vert \cdot\right\Vert $ is any matrix norm. A theorem
of Fursternberg-Kesten \cite[Theorem 2]{FurKest} proves the analog of the Law of
Large Numbers for the product of random matrices by showing that 
\begin{equation}
\lim_{n\to\infty}\frac{\log\left\Vert S_{n}\right\Vert }{n}=\lambda\,\,\,a.s.\label{eq:LLN-FK}
\end{equation}
as long as $\mathbb{E}\log^{+}\left\Vert Y_{1}\right\Vert <\infty$
where $\log^{+}x:=\max\left\{ 0,\log x\right\} $. A
useful formula that can be used in obtaining exact values for $\lambda$ for matrices $Y_{i}$ with some distribution $\mu$ on a group $G\subset\text{SL}\left(d,\mathbb{R}\right)$
was found by Furstenberg \cite{Furstenberg1963}
\begin{equation}
\lambda=\int_{\text{SL}\left(d,\mathbb{R}\right)}\int_{\mathbb{P}^{d-1}}\log\left\Vert Ax\right\Vert d\nu\left(x\right)d\mu\left(A\right),\label{eq:Fursteberg-formula}
\end{equation}
where $\mathbb{P}^{d-1}$ is the projective space, and $\nu$
is a maximal $\mu$-invariant measure.

We also point to the classical works of  \cite{Tutubalin,Le_Page} (see also \cite{Bougerol,GL_CLT,Comtet}) where conditions
are given on the distribution of $\left\{ Y_{j}\right\} _{j\geq1}$
so that the following central limit theorem holds
\begin{equation}
\frac{1}{\sqrt{n}}\left(\log\left\Vert S_{n}\right\Vert -n\lambda\right)\overset{\mathcal{L}}{\to}N\left(0,\sigma^{2}\right)\label{eq:CLT-1}
\end{equation}
for some $\sigma^{2}\geq0$ where the convergence holds in law. 

There has been several works in obtaining explicit formulas for the Lyapunov
exponent given a random matrix distribution $\left\{ Y_{j}\right\} _{j\geq1}$
such as in the works of \cite{Bougerol,Crisanti,Forrester2013,Forrester-Zhang-2020,Kargin,Lima-Rahibe1994,Mannion,Marklof-etall2008,Newman1,Akemann-Burda-Kieburg2014}. But in many cases, one cannot obtain
explicit formulas for the Lyapunov exponent given a particular random matrix ensemble, even for $2\times 2$ random matrices. In fact, finding an exact $\mu-$invariant measure $\nu$ in \eqref{eq:CLT-1} is a difficult problem in itself \cite{Marklof-etall2008}. In situations where explicit expressions aren't available, one can approximate the Lyapunov exponent such
as in the works of \cite{Pollicott2010,Rajeshwari-etall-2020,Lemm-Sutter-2020,Protasov-Jungers-2013,Sturman-Thiffeault-2019,Viswanath2000,Jurga-Morris-2019}. Explicit formulas for the
variance $\sigma^{2}$ are also known though less studied, see \cite{Newman,Reddy-2019} for example. We point to the work of \cite[Proposition 2.1]{Newman}
 where explicit formulas are given for the Lyapunov exponent and variance given a condition on the distribution of the random matrices.  The condition given in \cite[Proposition 2.1]{Newman} is only valid for special cases and does not hold for general random matrices.

Most of the theory for products of random matrices require the matrices
to come from $\text{GL}\left(d,\mathbb{R}\right)$ and often require
conditions on the subgroup generated by the support of the distribution
for the given random matrix model. An example is the well-known formula for computing Lyapunov exponents by Furstenberg (\eqref{eq:Fursteberg-formula}, see also \cite[Theorem II.4.1]{Bougerol}).  If the matrices are non-invertible (also defined as singular), then
other than \eqref{eq:LLN-FK},  most of the results of the existing literature are not applicable. This holds especially true for central limit theorems with explicit variance for the products of non-invertible matrices. We do point to the works of \cite{Feng-Lo-Shen-2020,Freijo-Duarte-2022,Lima-Rahibe1994} for some results for non-invertible matrices. The main goal of this paper
is to find explicit formulas for the Lyapunov exponent $\lambda$
and prove a central limit theorem with an explicit variance formula
$\sigma^{2}$ in terms of the distribution of the matrix entries for the products of general $2\times 2$ non-invertible matrices. Our work fills this gap in the literature by finding explicit formulas, such as in \eqref{eq:Fursteberg-formula} for invertible matrices,  for both $\lambda$ and $\sigma^2$.

In particular, we consider the products of random matrices of the
form 
\[
Y_{j}=\left[\begin{array}{cc}
a_{j} & b_{j}\\
c_{j} & d_{j}
\end{array}\right],\,\,\,\,\,j\geq1
\]
where $Y_{j}$ are non-invertible matrices in $M_{2}\left(\mathbb{R}\right)$
and $\left\{ \left(a_{j},b_{j},c_{j},d_{j}\right)\right\} _{j\geq1}$
are i.i.d. sequences of random vectors. We will assume $a_j$ does not have an atom at zero. Since the $Y_{j}$ are non-invertible
then $a_{j}d_{j}-b_{j}c_{j}=0$ for $j\geq1$ and it 
suffices to consider $Y_{j}$ of the form 
\begin{equation}
Y_{j}=\left[\begin{array}{cc}
a_{j} & b_{j}\\
c_{j} & \frac{b_{j}c_{j}}{a_{j}}
\end{array}\right],\,\,\,\,\,j\geq1.\label{eq:matrix-model}
\end{equation}

As far as the authors know, the only paper to prove a CLT with an explicit variance for the
products of $2\times2$ random non-invertible matrices is the special case in \cite[Theorem 2.2]{Mariano-Panzo-2022}. In particular, in \cite{Mariano-Panzo-2022} the authors prove a CLT with an exact formula for $\sigma^2$ for  matrices of
the form 
\begin{equation}
Y_{j}=\left[\begin{array}{cc}
1 & x_{j}\\
\frac{1}{x_{j}} & 1
\end{array}\right],\label{eq:Hills-model}
\end{equation}
where $\left\{ x_{j}\right\} _{j\geq1}$ is an i.i.d. sequence of
random variables atomless at zero.  Unlike in the invertible matrix case, there is an interesting
and precise nondegeneracy condition that characterizes the entries $\left\{ x_{j}\right\} _{j\geq1}$  that give $\sigma^{2}=0$. These matrices are related to the
random Hill's equation studied in \cite{Adams-Bloch2008,Adams-Bloch2009,Adams-Bloch2010,Hills_2013}. In \cite{Hills}, Adams-Bloch-Lagarias were the first to prove an exact formula for the Lyapunov exponent. The Hill's equation has appeared in the literature with various applications such as in
 the modeling of lunar orbits \cite{Hill-1986} and various other astrophysical orbit problems \cite{Adams-Bloch2008}. See \cite{Hills_book} for various physical and engineering applications of the Hill's equation's.

We are now ready to state our main theorem. We prove an explicit formula
for the Lypunov exponent $\lambda$ and a central limit theorem with
an explicit formula for the variance $\sigma^{2}$ which is given in terms
of the entries $\left\{ \left(a_{j},b_{j},c_{j}\right)\right\} _{j\geq1}$
of $Y_{j}$. We point out that no assumption is given on the distribution
of the entries $\left\{ \left(a_{j},b_{j},c_{j}\right)\right\} _{j\geq1}$
other than a mild moment condition and that the entries are atomless at zero. Our results are the first to cover general $2\times 2$ non-invertible matrices which includes the special case given in \eqref{eq:Hills-model}. 
\begin{thm}
\label{thm:Main}Consider the random non-invertible matrices of the form
\[
Y_{j}=\left[\begin{array}{cc}
a_{j} & b_{j}\\
c_{j} & \frac{b_{j}c_{j}}{a_{j}}
\end{array}\right],\,\,\,j\geq1
\]
where $\left\{ \left(a_{j},b_{j},c_{j}\right)\right\} _{j\geq1}$
is an i.i.d. sequence of $\mathbb{R}\backslash\left\{ 0\right\} \times\mathbb{R}^{2}-$valued
random variables such that 
\begin{equation}
\mathbb{E}\left[\log^{+}\left(\left|a_{1}\right|+\left|c_{1}\right|\right)\right]<\infty\text{ and }\mathbb{E}\left[\log\left(1+\frac{\left|b_{1}\right|}{\left|a_{1}\right|}\right)\right]<\infty.\label{eq:MomentCondition1}
\end{equation}
Then $\left(\ref{eq:LLN-FK}\right)$ holds and the Lyapunov exponent
has the explicit expression
\begin{equation}
\lambda=\mathbb{E}\left[\log\left|a_{1}+\frac{b_{2}c_{1}}{a_{2}}\right|\right].\label{eq:Explicit-LyapuovFormula}
\end{equation}
Moreover, if we further assume that 
\begin{equation}
\mathbb{E}\left[\left(\log\left|a_{1}+\frac{b_{2}c_{1}}{a_{2}}\right|\right)^{2}\right]<\infty,\label{eq:MomentCondition2}
\end{equation}
then $\frac{1}{\sqrt{n}}\left(\log\left\Vert S_{n}\right\Vert -n\lambda\right)\overset{\mathcal{L}}{\to}N\left(0,\sigma^{2}\right)$
and the variance has the explicit expression 
\begin{equation}
\sigma^{2}=\mathbb{E}\left[\left(\log\left|a_{1}+\frac{b_{2}c_{1}}{a_{2}}\right|\right)^{2}\right]+2\mathbb{E}\left[\log\left|a_{1}+\frac{b_{2}c_{1}}{a_{2}}\right|\log\left|a_{2}+\frac{b_{3}c_{2}}{a_{3}}\right|\right]-3\lambda^{2}.\label{eq:Explicit-variance-formula}
\end{equation}
\end{thm}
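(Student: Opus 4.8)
The plan rests on the fact that a non-invertible $2\times 2$ matrix has rank at most one, so each $Y_j$ factors as a rank-one product. Setting
\[
u_j=\begin{pmatrix}a_j\\ c_j\end{pmatrix},\qquad v_j=\begin{pmatrix}1\\ b_j/a_j\end{pmatrix},
\]
a direct computation gives $Y_j=u_j v_j^{T}$, and since $v_{j+1}^{T}u_j=a_j+\tfrac{b_{j+1}c_j}{a_{j+1}}$ is a scalar, telescoping the product collapses it:
\[
S_n=u_n v_n^{T}u_{n-1}v_{n-1}^{T}\cdots u_1 v_1^{T}=\Big(\prod_{j=1}^{n-1}r_j\Big)\,u_n v_1^{T},\qquad r_j:=a_j+\frac{b_{j+1}c_j}{a_{j+1}}.
\]
Because $\|uv^{T}\|$ is, up to norm-equivalence constants, equal to $\|u\|_2\|v\|_2$, this produces the additive identity
\[
\log\|S_n\|=\sum_{j=1}^{n-1}\log|r_j|+\log\|u_n\|_2+\log\|v_1\|_2+O(1),
\]
and the whole theorem reduces to a statement about the scalar sum $\sum_j\log|r_j|$. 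The key structural point is that $r_j$ depends only on the consecutive vectors $(a_j,b_j,c_j)$ and $(a_{j+1},b_{j+1},c_{j+1})$; hence $(\log|r_j|)_{j\ge1}$ is a stationary, ergodic, $1$-dependent sequence (a sliding block-function of the i.i.d.\ input), which is exactly the setting of the classical $m$-dependent limit theorems.

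For \eqref{eq:Explicit-LyapuovFormula}: the bound $\|Y_1\|\le C(|a_1|+|c_1|)(1+|b_1|/|a_1|)$ together with \eqref{eq:MomentCondition1} gives $\mathbb{E}\log^{+}\|Y_1\|<\infty$, so \eqref{eq:LLN-FK} holds by Furstenberg--Kesten, with some limit $\lambda\in[-\infty,\infty)$. The analogous estimate $|r_1|\le(1+|a_1|+|c_1|)(1+|b_2|/|a_2|)$ gives $\mathbb{E}\log^{+}|r_1|<\infty$, and under \eqref{eq:MomentCondition1} one checks $\mathbb{E}|\log|r_1||<\infty$ (in the degenerate cases where $\mathbb{E}\log|r_1|=-\infty$ one verifies directly from the identity above that $\lambda=-\infty$ as well, so the claimed formula persists). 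Birkhoff's ergodic theorem then gives $\tfrac1n\sum_{j=1}^{n-1}\log|r_j|\to\mathbb{E}\log|r_1|$ a.s.; since $\log\|v_1\|_2\ge0$ is a fixed finite random variable and $\tfrac1n\log^{+}\|u_n\|_2\to0$ a.s.\ by Borel--Cantelli, the additive identity, combined with the fact that the i.i.d.\ sequence $\tfrac1n\log\|u_n\|_2$ cannot converge a.s.\ to a nonzero constant, forces $\lambda=\mathbb{E}\log|r_1|=\mathbb{E}\log|a_1+b_2c_1/a_2|$.

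For the central limit theorem I would invoke \eqref{eq:MomentCondition2}, i.e.\ $\mathbb{E}(\log|r_1|)^2<\infty$ (which makes $\lambda$ finite and, by Cauchy--Schwarz, $\mathbb{E}[\log|r_1|\log|r_2|]$ finite), and apply the classical central limit theorem for stationary $m$-dependent sequences (in the Hoeffding--Robbins form), with $m=1$, to $X_j:=\log|r_j|-\lambda$. This yields $\tfrac1{\sqrt n}\sum_{j=1}^{n}X_j\overset{\mathcal{L}}{\to}N(0,\sigma^2)$ with
\[
\sigma^2=\operatorname{Var}(\log|r_1|)+2\operatorname{Cov}(\log|r_1|,\log|r_2|)=\mathbb{E}(\log|r_1|)^2+2\,\mathbb{E}[\log|r_1|\log|r_2|]-3\lambda^2,
\]
the covariances at lag $\ge 2$ vanishing by $1$-dependence; substituting $r_1=a_1+b_2c_1/a_2$ and $r_2=a_2+b_3c_2/a_3$ gives precisely \eqref{eq:Explicit-variance-formula}. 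It then remains to transfer this to $\log\|S_n\|$: by the additive identity, $\tfrac1{\sqrt n}(\log\|S_n\|-n\lambda)$ equals $\tfrac1{\sqrt n}\sum_{j=1}^{n-1}X_j$ plus $\tfrac1{\sqrt n}(\log\|u_n\|_2+\log\|v_1\|_2-\lambda+O(1))$; the latter tends to $0$ in probability since $\log\|v_1\|_2$ is a fixed finite random variable and $\log\|u_n\|_2\overset{d}{=}\log\|u_1\|_2$ is a.s.\ finite, so $\tfrac1{\sqrt n}\log\|u_n\|_2\to 0$ in probability. A Slutsky argument (and discarding the single missing term $j=n$) completes the proof.

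I expect the genuinely substantive part to be not any single deep step but the careful verification, under the minimal hypotheses \eqref{eq:MomentCondition1}--\eqref{eq:MomentCondition2}, that $(\log|r_j|)$ is integrable (resp.\ square-integrable) and that the two boundary terms $\log\|u_n\|_2$ and $\log\|v_1\|_2$ are negligible at the relevant scale, with the various degenerate cases (some $r_j=0$, or $\mathbb{E}\log|r_1|=-\infty$) dispatched separately. Once the rank-one telescoping identity is in hand, the Lyapunov formula and the CLT both follow from standard ergodic and $m$-dependent limit theory applied to a one-dimensional sequence.
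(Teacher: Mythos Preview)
Your proposal is correct and follows essentially the same strategy as the paper: derive the product formula $S_n=\big(\prod_{j=1}^{n-1}r_j\big)\,u_n v_1^{T}$ (the paper obtains the identical matrix by direct induction rather than your rank-one telescoping $Y_j=u_jv_j^{T}$), reduce $\log\|S_n\|$ to the scalar sum $\sum\log|r_j|$ plus a boundary term, read off $\lambda=\mathbb{E}\log|r_1|$, and then apply a CLT for stationary $1$-dependent sequences (Diananda in the paper, Hoeffding--Robbins for you) together with Slutsky. The only cosmetic differences are that the paper identifies $\lambda$ via $\lim_n \tfrac1n\mathbb{E}\log\|S_n\|$ rather than your a.s.\ argument, and controls the boundary term $B_n/\sqrt n$ by Markov's inequality instead of tightness; your rank-one factorization is a slightly cleaner way to arrive at the same product formula.
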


The proof of Theorem \ref{thm:Main} relies on an explicit product formula for $S_n$, which will be given in Lemma \ref{lem:prod-formula}. With this product formula we can then compute $\lambda$ and use the theory of $m$-dependent sequences of random variables to prove the CLT using a theorem of Diananda in \cite[Theorem 2]{Diananda}. 

As demonstrated by the special case of the matrices related to the random Hill's equation
given in $\left(\ref{eq:Hills-model}\right)$, there
are non-trivial distributions $\left\{ \left(a_{j},b_{j},c_{j}\right)\right\} _{j\geq1}$
where the related CLT is degenerate with $\sigma^{2}=0$. In the case of the random Hill's matrices, a precise non-degeneracy
condition was given in \cite[Theorem 2.2]{Mariano-Panzo-2022} 
that completely characterizes the distribution
of the entries $\left\{ x_{j}\right\} _{j\geq1}$ that gives $\sigma^{2}=0$.
Unfortunately, such a precise characterization result does not seem tractable for
general non-invertible matrices. We use a result of Janson \cite{Janson-2023} (see also \cite{Janson}) to give
a non-degeneracy condition and prove some properties of the degenerate
case.

\begin{prop}
\label{prop:Characterization-sigma}Consider the setting of Theorem
\ref{thm:Main}. We have that $\sigma^{2}=0$ if and only if there
exists a function $\varphi:\mathbb{R}^{3}\to\mathbb{R}$ such that
for any $n\in\mathbb{N}$,

\[
\sum_{j=1}^{n}\left(\log\left[\left|a_{j}+\frac{b_{j+1}c_{j}}{a_{j+1}}\right|\right]-\lambda\right)=\varphi\left(\xi_{n+1}\right)-\varphi\left(\xi_{1}\right),\,\,\,\,a.s.
\]
where $\left\{ \xi_{j}\right\} _{j\geq1}=\left\{ \left(a_{j},b_{j},c_{j}\right)\right\} _{j\geq1}$.
Moreover, if $\sigma^{2}=0$ and the distribution of $\left(a_{j},b_{j},c_{j}\right)$
has an atom at $\left(a,b,c\right)$, then 
\[
\lambda=\log\left|a+\frac{bc}{a}\right|
\]
and for all $j\geq1$, 
\[
\left(a+\frac{b_{j}c}{a_{j}}\right)^{2}\left(a_{j}+\frac{bc_{j}}{a}\right)^{2}=\left(a+\frac{bc}{a}\right)^{4},\,\,\,\,a.s.
\]
\end{prop}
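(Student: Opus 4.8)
The plan is to recast the statement as a coboundary (``degenerate sum'') question for a stationary $1$-dependent sequence and then invoke a theorem of Janson. Write $\xi_j=(a_j,b_j,c_j)$ and $W_j:=\log|a_j+\tfrac{b_{j+1}c_j}{a_{j+1}}|$. The product formula of Lemma~\ref{lem:prod-formula}, as used in the proof of Theorem~\ref{thm:Main}, gives $\log\|S_n\|-n\lambda=\sum_{j=1}^{n-1}(W_j-\lambda)+R_n$, where $R_n$ is a boundary term depending only on $\xi_1$ and $\xi_n$; since $\xi_1$ and $\xi_n$ are identically distributed, $\{R_n\}$ is tight in $n$, so $R_n/\sqrt n\to0$ in probability. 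The sequence $\{W_j\}_{j\ge1}$ is a measurable block factor $W_j=f(\xi_j,\xi_{j+1})$ of the i.i.d.\ sequence $\{\xi_j\}$, hence stationary and $1$-dependent, and $W_1\in L^2$ by \eqref{eq:MomentCondition2}. Therefore $\mathrm{Var}\big(\sum_{j=1}^{n}(W_j-\lambda)\big)=n\big(\mathrm{Var}(W_1)+2\,\mathrm{Cov}(W_1,W_2)\big)-2\,\mathrm{Cov}(W_1,W_2)$, and by Theorem~\ref{thm:Main} the CLT variance is $\sigma^2=\mathrm{Var}(W_1)+2\,\mathrm{Cov}(W_1,W_2)$; in particular $\sigma^2=0$ if and only if $\sup_n\mathrm{Var}\big(\sum_{j=1}^n(W_j-\lambda)\big)<\infty$.

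For the equivalence I would apply Janson's characterization of degenerate sums of block factors \cite{Janson-2023} (see also \cite{Janson}) in the case $m=1$: the centered $2$-block factor $W_j-\lambda$ has uniformly bounded partial-sum variances if and only if there is a measurable $\varphi:\R^3\to\R$ with $W_j-\lambda=\varphi(\xi_{j+1})-\varphi(\xi_j)$ a.s.\ for every $j\ge1$. Telescoping this per-step identity over $j=1,\dots,n$ gives exactly the asserted identity $\sum_{j=1}^n(\log|a_j+\tfrac{b_{j+1}c_j}{a_{j+1}}|-\lambda)=\varphi(\xi_{n+1})-\varphi(\xi_1)$. Conversely, if a measurable $\varphi$ with this property exists, then applying it with $n-1$ in place of $n$ and combining with the decomposition above yields $\log\|S_n\|-n\lambda=\varphi(\xi_n)-\varphi(\xi_1)+R_n$, which is tight in $n$; hence $\tfrac1{\sqrt n}(\log\|S_n\|-n\lambda)\to0$ in probability, so $\sigma^2=0$ by Theorem~\ref{thm:Main}. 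The only real care needed here is in verifying the hypotheses of \cite{Janson-2023} (stationarity, the block-factor form, $W_1\in L^2$); granting these, the equivalence is immediate.

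For the ``moreover'' part, assume $\sigma^2=0$, fix a measurable $\varphi$ as above, and suppose the law of $\xi_1$ has an atom at $(a,b,c)$, say of mass $p>0$; set $K:=\varphi(a,b,c)$. Restricting the a.s.\ identity $W_j-\lambda=\varphi(\xi_{j+1})-\varphi(\xi_j)$ to the positive-probability event $\{\xi_{j+1}=(a,b,c)\}$ and using that $\xi_j$ is independent of $\xi_{j+1}$, one obtains $\varphi(v)=K+\lambda-\log|v_1+\tfrac{b v_3}{a}|$ for a.e.\ $v=(v_1,v_2,v_3)$ with respect to the law of $\xi_1$; restricting instead to $\{\xi_j=(a,b,c)\}$ gives $\varphi(v)=K-\lambda+\log|a+\tfrac{c v_2}{v_1}|$ for a.e.\ such $v$. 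Equating the two expressions yields $(v_1+\tfrac{b v_3}{a})^2(a+\tfrac{c v_2}{v_1})^2=e^{4\lambda}$ for a.e.\ $v$ in the support of $\xi_1$. Since $(a,b,c)$ is an atom it lies in this full-measure set, and substituting $v=(a,b,c)$ gives $e^{2\lambda}=(a+\tfrac{bc}{a})^2$, so $a+\tfrac{bc}{a}\ne0$ and $\lambda=\log|a+\tfrac{bc}{a}|$. Feeding $e^{4\lambda}=(a+\tfrac{bc}{a})^4$ back into that identity and reading it off along $\xi_j\stackrel{d}{=}\xi_1$ gives $(a+\tfrac{b_j c}{a_j})^2(a_j+\tfrac{b c_j}{a})^2=(a+\tfrac{bc}{a})^4$ a.s.\ for every $j\ge1$.

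I expect the main obstacle to be bookkeeping rather than anything conceptually hard. The delicate step is passing from the almost-sure coboundary identity to a pointwise identity valid for a.e.\ value of a single marginal — which is exactly where the independence of $\xi_j$ and $\xi_{j+1}$ is used — and then checking that the two resulting formulas for $\varphi$ are mutually consistent on the full-measure support of $\xi_1$. Everything else is the product formula of Lemma~\ref{lem:prod-formula}, the elementary variance identity for $1$-dependent sequences, and a black-box application of the degenerate-sum theorem of \cite{Janson-2023}.
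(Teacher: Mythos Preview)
Your proposal is correct and follows essentially the same route as the paper: both arguments reduce the equivalence $\sigma^2=0\iff$ coboundary to Janson's degenerate-sum theorem for block factors, after identifying $W_j=\log|a_j+b_{j+1}c_j/a_{j+1}|$ as a $2$-block factor of the i.i.d.\ sequence $\{\xi_j\}$. The only notable difference is in the atomic clause: the paper conditions on $\{\xi_1=\xi_2=(a,b,c)\}$ (with $n=1$) to extract $\lambda$ and then on $\{\xi_1=\xi_3=(a,b,c)\}$ (with $n=2$) to get the product identity, whereas you condition once on $\{\xi_{j+1}=(a,b,c)\}$ and once on $\{\xi_j=(a,b,c)\}$ to obtain two explicit formulas for $\varphi$, equate them, and then specialize to $v=(a,b,c)$. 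Your variant is a touch more direct and yields both conclusions in one stroke; the paper's version avoids writing down $\varphi$ explicitly but requires two separate conditioning steps.
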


We organize the paper as follows. In Section \ref{Sec:Lyapunov}, we prove the product formula for $S_n$ and give the proof for the explicit formula of the Lyapunov exponent in Equation $\left(\ref{eq:Explicit-LyapuovFormula}\right)$. In Section \ref{SubSec:CLTproof} we give the proof of the CLT with explicit variance. We prove Proposition \ref{prop:Characterization-sigma} in Section \ref{SubSec:Degenerate}. Finally, we use Theorem \ref{thm:Main} to give examples with exact values of $\lambda$ and $\sigma^2$.


\section{Proof of Theorem \ref{thm:Main}: Explicit $\lambda$}\label{Sec:Lyapunov}

\subsection{Product Formula}

We first start by proving the following product formula. 
\begin{lemma}\label{lem:prod-formula}
The product of the matrices of the form 
\[
Y_{j}=\left[\begin{array}{cc}
a_{j} & b_{j}\\
c_{j} & \frac{b_{j}c_{j}}{a_{j}}
\end{array}\right],\,\,\,\,\,j\geq1
\]
can be expressed as 
\begin{equation}
S_{n}=\beta_{n}\left[\begin{array}{cc}
a_{n} & \frac{b_{1}}{a_{1}}a_{n}\\
c_{n} & \frac{b_{1}}{a_{1}}c_{n}
\end{array}\right]\label{eq:S_n_representation}
\end{equation}
where $\beta_{1}=1$ and 
\[
\beta_{n}=\prod_{j=1}^{n-1}\left(a_{j}+\frac{b_{j+1}c_{j}}{a_{j+1}}\right),n\geq1.
\]
\end{lemma}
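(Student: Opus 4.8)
The plan is to prove the product formula by induction on $n$. The base case $n=1$ is immediate: $\beta_1 = 1$ and $S_1 = Y_1 = \left[\begin{smallmatrix} a_1 & b_1 \\ c_1 & b_1 c_1/a_1\end{smallmatrix}\right]$, which matches $\beta_1\left[\begin{smallmatrix} a_1 & (b_1/a_1)a_1 \\ c_1 & (b_1/a_1)c_1\end{smallmatrix}\right]$ since $(b_1/a_1)a_1 = b_1$ and $(b_1/a_1)c_1 = b_1 c_1/a_1$.

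For the inductive step, I would assume \eqref{eq:S_n_representation} holds for some $n\ge 1$ and compute $S_{n+1} = Y_{n+1} S_n$ directly. Writing $Y_{n+1} = \left[\begin{smallmatrix} a_{n+1} & b_{n+1} \\ c_{n+1} & b_{n+1}c_{n+1}/a_{n+1}\end{smallmatrix}\right]$ and multiplying by $\beta_n\left[\begin{smallmatrix} a_n & (b_1/a_1)a_n \\ c_n & (b_1/a_1)c_n\end{smallmatrix}\right]$, the key observation is that the columns of $S_n$ are proportional (the second column is $b_1/a_1$ times the first), so this proportionality is preserved under left multiplication. The first column of $Y_{n+1}S_n$ is $\beta_n$ times $\left[\begin{smallmatrix} a_{n+1}a_n + b_{n+1}c_n \\ c_{n+1}a_n + (b_{n+1}c_{n+1}/a_{n+1})c_n\end{smallmatrix}\right]$. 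Factoring $a_{n+1}$ out of the top entry gives $a_{n+1}(a_n + b_{n+1}c_n/a_{n+1})$, and factoring $c_{n+1}$ out of the bottom entry gives $c_{n+1}(a_n + b_{n+1}c_n/a_{n+1})$. Hence the first column equals $\beta_n\left(a_n + \frac{b_{n+1}c_n}{a_{n+1}}\right)\left[\begin{smallmatrix} a_{n+1} \\ c_{n+1}\end{smallmatrix}\right] = \beta_{n+1}\left[\begin{smallmatrix} a_{n+1} \\ c_{n+1}\end{smallmatrix}\right]$, using the recursion $\beta_{n+1} = \beta_n\left(a_n + \frac{b_{n+1}c_n}{a_{n+1}}\right)$. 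The second column is $b_1/a_1$ times the first by linearity, giving $\beta_{n+1}\left[\begin{smallmatrix} (b_1/a_1)a_{n+1} \\ (b_1/a_1)c_{n+1}\end{smallmatrix}\right]$, which is exactly the claimed form for $S_{n+1}$.

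There is essentially no obstacle here — the proof is a short direct computation. The only thing to be mildly careful about is the bookkeeping with the indices in $\beta_n$ (noting $\beta_{n+1}/\beta_n = a_n + b_{n+1}c_n/a_{n+1}$) and the fact that division by $a_j$ is legitimate because the hypothesis guarantees each $a_j \neq 0$ almost surely. One could alternatively phrase the whole argument by noting that each $Y_j$ has rank one and writing $Y_j = \left[\begin{smallmatrix} a_j \\ c_j\end{smallmatrix}\right]\left[\begin{smallmatrix} 1 & b_j/a_j\end{smallmatrix}\right]$, so that $S_n = \left(\prod \text{scalars}\right)\left[\begin{smallmatrix} a_n \\ c_n\end{smallmatrix}\right]\left[\begin{smallmatrix} 1 & b_1/a_1\end{smallmatrix}\right]$ telescopes, with the scalar at each stage being the inner product $\left[\begin{smallmatrix} 1 & b_{j+1}/a_{j+1}\end{smallmatrix}\right]\left[\begin{smallmatrix} a_j \\ c_j\end{smallmatrix}\right] = a_j + b_{j+1}c_j/a_{j+1}$; I would present the induction as the cleaner self-contained version but may remark on this rank-one viewpoint since it explains why the formula takes this shape.
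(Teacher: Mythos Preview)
Your proof is correct and follows essentially the same approach as the paper: induction on $n$, with the base case immediate and the inductive step carried out by directly computing $Y_{n+1}S_n$ and factoring out $a_n + b_{n+1}c_n/a_{n+1}$. Your observation that the second column of $S_n$ is $b_1/a_1$ times the first (so only one column needs to be computed) and your rank-one remark are nice streamlinings, but the underlying argument is identical to the paper's.
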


\begin{proof}
The representation given in (\ref{eq:S_n_representation}) is clear
when $n=1$ since 
\[
S_{1}=\beta_{1}\left[\begin{array}{cc}
a_{1} & \frac{b_{1}}{a_{1}}a_{1}\\
c_{1} & \frac{b_{1}}{a_{1}}c_{1}
\end{array}\right]=\left[\begin{array}{cc}
a_{1} & b_{1}\\
c_{1} & \frac{b_{1}c_{1}}{a_{1}}
\end{array}\right].
\]
Suppose (\ref{eq:S_n_representation}) holds for $n$ , then we have
that 
\begin{align*}
 & S_{n+1}=Y_{n+1}S_{n}\\
 & =\beta_{n}\left[\begin{array}{cc}
a_{n+1} & b_{n+1}\\
c_{n+1} & \frac{b_{n+1}c_{n+1}}{a_{n+1}}
\end{array}\right]\left[\begin{array}{cc}
a_{n} & \frac{b_{1}}{a_{1}}a_{n}\\
c_{n} & \frac{b_{1}}{a_{1}}c_{n}
\end{array}\right]\\
 & =\beta_{n}\left[\begin{array}{cc}
a_{n}a_{n+1}+b_{n+1}c_{n} & a_{n+1}\frac{b_{1}}{a_{1}}a_{n}+b_{n+1}\frac{b_{1}}{a_{1}}c_{n}\\
c_{n+1}a_{n}+\frac{b_{n+1}c_{n+1}}{a_{n+1}}c_{n} & c_{n+1}\frac{b_{1}}{a_{1}}a_{n}+\frac{b_{n+1}c_{n+1}}{a_{n+1}}\frac{b_{1}}{a_{1}}c_{n}
\end{array}\right]\\
 & =\beta_{n}\left[\begin{array}{cc}
a_{n+1}\left(a_{n}+\frac{b_{n+1}c_{n}}{a_{n+1}}\right) & a_{n+1}\frac{b_{1}}{a_{1}}\left(a_{n}+\frac{b_{n+1}c_{n}}{a_{n+1}}\right)\\
c_{n+1}\left(a_{n}+\frac{b_{n+1}c_{n}}{a_{n+1}}\right) & \frac{b_{1}}{a_{1}}c_{n+1}\left(a_{n}+\frac{b_{n+1}c_{n}}{a_{n+1}}\right)
\end{array}\right]\\
 & =\prod_{j=1}^{n}\left(a_{j}+\frac{b_{j+1}c_{j}}{a_{j+1}}\right)\left[\begin{array}{cc}
a_{n+1} & \frac{b_{1}}{a_{1}}a_{n+1}\\
c_{n+1} & \frac{b_{1}}{a_{1}}c_{n+1}
\end{array}\right]=\beta_{n+1}\left[\begin{array}{cc}
a_{n+1} & \frac{b_{1}}{a_{1}}a_{n+1}\\
c_{n+1} & \frac{b_{1}}{a_{1}}c_{n+1}
\end{array}\right].
\end{align*}
\end{proof}

\subsection{Proof of exact $\lambda$ in Equation $\left(\ref{eq:Explicit-LyapuovFormula}\right)$}
\begin{proof}
We first take the $\log$ of the norm of the product representation of $S_n$ given in  (\ref{eq:S_n_representation}) of Lemma \ref{lem:prod-formula} 
to get 
\begin{equation}
\log\left\Vert S_{n}\right\Vert =\sum_{j=1}^{n-1}\log\left|a_{j}+\frac{b_{j+1}c_{j}}{a_{j+1}}\right|+\log\left\Vert \left[\begin{array}{cc}
a_{n} & \frac{b_{1}}{a_{1}}a_{n}\\
c_{n} & \frac{b_{1}}{a_{1}}c_{n}
\end{array}\right]\right\Vert .\label{eq:LogS_n}
\end{equation}
Consider the degenerate case where there is a positive probability
where $a_{j}+\frac{b_{j+1}c_{j}}{a_{j+1}}=0$. Note that in this case we have
$\log\left\Vert S_{n}\right\Vert =-\infty$ hence $\lambda=\mathbb{E}\left[\log\left|a_{1}+\frac{b_{2}c_{1}}{a_{2}}\right|\right]=-\infty$
as needed.

Let $\left\Vert \cdot\right\Vert $ denote the Hilbert-Schmidt norm
and compute
\begin{align*}
\left\Vert Y_{1}\right\Vert  & =\sqrt{a_{1}^{2}+b_{1}^{2}+c_{1}^{2}+\frac{b_{1}^{2}c_{1}^{2}}{a_{1}^{2}}}=\sqrt{a_{1}^{2}\left(1+\frac{c_{1}^{2}}{a_{1}^{2}}\right)\left(1+\frac{b_{1}^{2}}{a_{1}^{2}}\right)}.
\end{align*}
Using the moment condition $\left(\ref{eq:MomentCondition1}\right)$
with the property that $\log^{+}\left(xy\right)\leq\log^{+}\left(x\right)+\log^{+}\left(y\right)$
for any $x,y>0$ we have 
\begin{align*}
 & \mathbb{E}\left[\log^{+}\left\Vert Y_{1}\right\Vert \right]=\mathbb{E}\left[\log^{+}\sqrt{\left(a_{1}^{2}+c_{1}^{2}\right)\left(1+\frac{b_{1}^{2}}{a_{1}^{2}}\right)}\right]\\
 & \leq\mathbb{E}\left[\log^{+}\sqrt{\left(a_{1}^{2}+c_{1}^{2}\right)}+\mathbb{E}\left[\log^{+}\sqrt{\left(1+\frac{b_{1}^{2}}{a_{1}^{2}}\right)}\right]\right]\\
 & \leq\mathbb{E}\left[\log^{+}\left(\left|a_{1}\right|+\left|c_{1}\right|\right)\right]+\mathbb{E}\left[\log\left(1+\frac{\left|b_{1}\right|}{\left|a_{1}\right|}\right)\right]<\infty.
\end{align*}
Hence by a direct application of \cite[Theorem 2]{FurKest} we have that
$\lambda=\lim_{n\to\infty}\frac{\log\left\Vert S_{n}\right\Vert }{n}<\infty$
almost surely. 

We are left to compute the Lyapunov exponent $\lambda$ explicitly.
First we show some finite moment conditions. Note that for all
$n\geq1$, 
\begin{align*}
\mathbb{E}\left[\log\left\Vert \left[\begin{array}{cc}
a_{n} & \frac{b_{1}}{a_{1}}a_{n}\\
c_{n} & \frac{b_{1}}{a_{1}}c_{n}
\end{array}\right]\right\Vert \right] & =\mathbb{E}\left[\log\sqrt{a_{n}^{2}+\frac{b_{1}^{2}}{a_{1}^{2}}a_{n}^{2}+c_{n}^{2}+\frac{b_{1}^{2}}{a_{1}^{2}}c_{n}^{2}}\right]=\mathbb{E}\left[\log\sqrt{\left(a_{n}^{2}+c_{n}^{2}\right)\left(1+\frac{b_{1}^{2}}{a_{1}^{2}}\right)}\right]\\
 & \leq\mathbb{E}\left[\log^{+}\left(\left|a_{n}\right|+\left|c_{n}\right|\right)\right]+\mathbb{E}\left[\log\left(1+\frac{\left|b_{1}\right|}{\left|a_{1}\right|}\right)\right]<\infty,
\end{align*}
where we used $\left(\ref{eq:MomentCondition1}\right)$. Moreover,
we have that for all $j\geq1$, $-\infty\leq\mathbb{E}\log\left[\left|a_{j}+\frac{b_{j+1}c_{j}}{a_{j+1}}\right|\right]<\infty$
since

\begin{align*}
\mathbb{E}\left[\log\left[\left|a_{j}+\frac{b_{j+1}c_{j}}{a_{j+1}}\right|\right]\right] & \leq\mathbb{E}\left[\log\left[\left(\left|a_{j}\right|+\left|c_{j}\right|\right)\left(1+\frac{\left|b_{j+1}\right|}{\left|a_{j+1}\right|}\right)\right]\right]\\
 & \leq\mathbb{E}\left[\log^{+}\left[\left(\left|a_{j}\right|+\left|c_{j}\right|\right)\right]\right]+\mathbb{E}\left[\log\left[\left(1+\frac{\left|b_{j+1}\right|}{\left|a_{j+1}\right|}\right)\right]\right]<\infty.
\end{align*}

Taking the expected value of $\left(\ref{eq:LogS_n}\right)$ and using
the finite moment conditions shown above we have that
\begin{align*}
\lambda & =\lim_{n\to\infty}\frac{1}{n}\left(\mathbb{E}\left[\sum_{j=1}^{n-1}\log\left[\left|a_{j}+\frac{b_{j+1}c_{j}}{a_{j+1}}\right|\right]\right]+\mathbb{E}\left[\log\left[\left\Vert \left[\begin{array}{cc}
a_{n} & \frac{b_{1}}{a_{1}}a_{n}\\
c_{n} & \frac{b_{1}}{a_{1}}c_{n}
\end{array}\right]\right\Vert \right]\right]\right)\\
 & =\lim_{n\to\infty}\frac{1}{n}\left(\sum_{j=1}^{n-1}\mathbb{E}\left[\log\left[\left|a_{1}+\frac{b_{2}c_{1}}{a_{2}}\right|\right]\right]+\mathbb{E}\left[\log\left[\left\Vert \left[\begin{array}{cc}
a_{2} & \frac{b_{1}}{a_{1}}a_{2}\\
c_{2} & \frac{b_{1}}{a_{1}}c_{2}
\end{array}\right]\right\Vert \right]\right]\right)\\
 & =\mathbb{E}\left[\log\left|a_{1}+\frac{b_{2}c_{1}}{a_{2}}\right|\right],
\end{align*}
as needed. 
\end{proof}

\section{Proof of Theorem \ref{thm:Main}: the explicit CLT and non-degeneracy condition}\label{Sec:CLTproof}

\subsection{Proof of the CLT and Equation $\left(\ref{eq:Explicit-variance-formula}\right)$}\label{SubSec:CLTproof}
\begin{proof}
First note that by the condition $\left(\ref{eq:MomentCondition1}\right)$
combined with the $L^{2}$-moment condition in $\left(\ref{eq:MomentCondition2}\right)$ we have that
\[
\left|\lambda\right|=\left|\mathbb{E}\left[\log\left|a_{1}+\frac{b_{2}c_{1}}{a_{2}}\right|\right]\right|\leq\sqrt{\mathbb{E}\left[\left(\log\left|a_{1}+\frac{b_{2}c_{1}}{a_{2}}\right|\right)^{2}\right]}<\infty.
\] 

By $\left(\ref{eq:LogS_n}\right)$
we can write
\begin{align}
\log\left\Vert S_{n}\right\Vert -n\lambda & =\sum_{j=1}^{n-1}\left(\underset{A_{j}}{\underbrace{\log\left[\left|a_{j}+\frac{b_{j+1}c_{j}}{a_{j+1}}\right|\right]-\lambda}}\right)+\underset{B_{n}}{\underbrace{\log\left[\left\Vert \left[\begin{array}{cc}
a_{n} & \frac{b_{1}}{a_{1}}a_{n}\\
c_{n} & \frac{b_{1}}{a_{1}}c_{n}
\end{array}\right]\right\Vert \right]}}-\lambda.\label{eq:CLT-prod-formula}
\end{align}
Our goal will be to apply \cite[Theorem 2]{Diananda} to the sequence
$\left\{ A_{j}\right\} _{j\geq1}$ to obtain an explicit CLT. First,
it is clear that $\left\{ A_{j}\right\} _{j\geq1}$ is a $1-$dependent
sequence of random variables. One-dependence means that $\left\{ A_{1},\dots,A_{j}\right\} $
is independent of $\left\{ A_{k},A_{k+1},\dots\right\} $ whenever
$k-j>1$. Define $C_{j}$ by $C_{i-j}=\mathbb{E}\left[A_{i}A_{j}\right]$
for all $i,j\in\mathbb{Z}.$ We then have that $\mathbb{E}\left[A_{j}\right]=0$,
\[
C_{0}=\mathbb{E}\left[A_{j}^{2}\right]=\mathbb{E}\left[\left(\log\left|a_{1}+\frac{b_{2}c_{1}}{a_{2}}\right|\right)^{2}\right]-\lambda^{2},
\]
and 
\[
C_{1}=C_{-1}=\mathbb{E}\left[A_{2}A_{1}\right]=\mathbb{E}\left[\log\left|a_{1}+\frac{b_{2}c_{1}}{a_{2}}\right|\log\left|a_{2}+\frac{b_{3}c_{2}}{a_{3}}\right|\right]-\lambda^{2}.
\]
By 1-dependence it is clear that $C_{i}=0$ for all $\left|i\right|>1$. 

Applying Diananda's CLT for stationary $m$-dependent sequences from
 \cite[Theorem 2]{Diananda} we have 
\[
\frac{1}{\sqrt{n}}A_{j}\overset{\mathcal{L}}{\to}N\left(0,\sigma^{2}\right)
\]
where 
\[
\sigma^{2}=\sum_{j=-1}^{1}C_{j}=\mathbb{E}\left[\left(\log\left|a_{1}+\frac{b_{2}c_{1}}{a_{2}}\right|\right)^{2}\right]+2\mathbb{E}\left[\log\left|a_{1}+\frac{b_{2}c_{1}}{a_{2}}\right|\log\left|a_{2}+\frac{b_{3}c_{2}}{a_{3}}\right|\right]-3\lambda^{2}.
\]
By the moment condition $\left(\ref{eq:MomentCondition1}\right),\left(\ref{eq:MomentCondition2}\right)$
and Cauchy--Schwarz we have that $\sigma^{2}<\infty$. A standard
application of Markov's inequality gives that $\frac{1}{\sqrt{n}}B_{n}\to0$
in probability. Then, by Slutsky's theorem, we have 
\[
\frac{1}{\sqrt{n}}\left(\log\left\Vert S_{n}\right\Vert -n\lambda\right)\overset{\mathcal{L}}{\to}N\left(0,\sigma^{2}\right),
\]
as desired.
\end{proof}

\subsection{Proof of Proposition \ref{prop:Characterization-sigma}}\label{SubSec:Degenerate}

\begin{proof}
Suppose $\sigma^{2}=0$ in $\left(\ref{eq:Explicit-variance-formula}\right)$. We 
then apply a theorem of Janson in \cite[Theorem 8.1]{Janson-2023} to obtain a characterization
of the degenerate case. We follow the notation from \cite{Janson-2023}. Let $\left\{ \xi_{i}\right\} _{i\geq1}$
be the i.i.d. sequence of random variables given by $\left\{ \left(a_{i},b_{i},c_{i}\right)\right\} _{i\geq1}$
in $\mathcal{S}_{0}=\mathbb{R}\backslash\left\{ 0\right\} \times\mathbb{R}^{2}$
as in the setting of Theorem \ref{thm:Main}. Let $X_{i}:=A_{i}$
as in $\left(\ref{eq:CLT-prod-formula}\right)$. Recall that $X_{i}$
is a two-block factor of $\xi_{i}$ since $X_{i}=h\left(\xi_{i},\xi_{i+1}\right)$
where $h:\mathcal{S}_{0}^{2}\to\mathbb{R}$ is given by 
\[
h\left(\left(a_{i},b_{i},c_{i}\right),\left(a_{i+1},b_{i+1},c_{i+1}\right)\right)=\log\left[\left|a_{i}+\frac{b_{i+1}c_{i}}{a_{i+1}}\right|\right]-\lambda.
\]
Let $f:\mathbb{R}\to\mathbb{R}$ be the identity with $\mathcal{S}=\mathbb{R},\ell=1$
in the notation of \cite{Janson-2023} . The U-statistic $U_{n}$ in  \cite[Equation $(3.1)$]{Janson-2023}  is the usual sum
\[
S_{n}=\sum_{i=1}^{n}X_{j}.
\]
By \cite[Theorem 8.1(vi)]{Janson-2023} we have that $\sigma^2=0$ if and only if there exists a function $\varphi:\mathcal{S}_{0}\to\mathbb{R}$
such that 
\[
\sum_{j=1}^{n}X_{j}=\varphi\left(\xi_{n+1}\right)-\varphi\left(\xi_{1}\right)
\]
for all $n\geq1$ so that 
\begin{equation}
\sum_{j=1}^{n}\left(\log\left[\left|a_{j}+\frac{b_{j+1}c_{j}}{a_{j+1}}\right|\right]-\lambda\right)=\varphi\left(a_{j+1},b_{j+1},c_{j+1}\right)-\varphi\left(a_{1},b_{1},c_{1}\right)\label{eq:degenerate-1}
\end{equation}
for all $n\geq1$. 

\textbf{Atomic Case:} 

Now suppose $\xi_{j}$ has an atom, that is, there exists a $\left(a,b,c\right)\in\mathcal{S}_{0}$
so that $\mathbb{P}\left(\left(a_{j},b_{j},c_{j}\right)=\left(a,b,c\right)\right)>0$. Using $n=1$ in $\left(\ref{eq:degenerate-1}\right)$, there exists
a function $\varphi:\left(\mathbb{R}\backslash\left\{ 0\right\} \right)^{3}\to\mathbb{R}$
such that 
\[
\log\left[\left|a_{1}+\frac{b_{2}c_{1}}{a_{2}}\right|\right]-\lambda=\varphi\left(\left(a_{2},b_{2},c_{2}\right)\right)-\varphi\left(\left(a_{1},b_{1},c_{1}\right)\right),\,\,\,a.s.
\]
By independence we know $\mathbb{P}\left(\left(a_{1},b_{1},c_{1}\right)=\left(a,b,c\right),\left(a_{2},b_{2},c_{2}\right)=\left(a,b,c\right)\right)>0$
and
\begin{align*}
 & \mathbb{P}\left(\left(a_{1},b_{1},c_{1}\right)=\left(a,b,c\right),\left(a_{2},b_{2},c_{2}\right)=\left(a,b,c\right)\right)\\
 & =\mathbb{P}\left(\left(a_{1},b_{1},c_{1}\right)=\left(a,b,c\right),\left(a_{2},b_{2},c_{2}\right)=\left(a,b,c\right)\right)\mathbb{P}\left(\lambda=\log\left|a+\frac{bc}{a}\right|\right)
\end{align*}
so that $\mathbb{P}\left(\lambda=\log\left|a+\frac{bc}{a}\right|\right)=1$.
Hence 
\[
\lambda=\log\left|a+\frac{bc}{a}\right|.
\]

Now let $n=2$ in $\left(\ref{eq:degenerate-1}\right)$, so that by
rewriting we have
\[
\left|\left(a_{1}+\frac{b_{2}c_{1}}{a_{2}}\right)\left(a_{2}+\frac{b_{3}c_{2}}{a_{3}}\right)\right|=e^{2\lambda+\varphi\left(\left(a_{3},b_{3},c_{3}\right)\right)-\varphi\left(\left(a_{1},b_{1},c_{1}\right)\right)},\,\,\,\,a.s.
\]
Define $\alpha=\mathbb{P}\left(\left(a_{j},b_{j},c_{j}\right)=\left(a,b,c\right)\right)>0$
and use independence with the equation above to obtain
\begin{align*}
 & \alpha^{2}=\mathbb{P}\left(\left(a_{1},b_{1},c_{1}\right)=\left(a,b,c\right),\left(a_{3},b_{3},c_{3}\right)=\left(a,b,c\right)\right)\\
 & =\mathbb{P}\left(\left(a_{1},b_{1},c_{1}\right)=\left(a,b,c\right),\left(a_{3},b_{3},c_{3}\right)=\left(a,b,c\right),\left|\left(a_{1}+\frac{b_{2}c_{1}}{a_{2}}\right)\left(a_{2}+\frac{b_{3}c_{2}}{a_{3}}\right)\right|=e^{2\lambda+\varphi\left(\left(a_{3},b_{3},c_{3}\right)\right)-\varphi\left(\left(a_{1},b_{1},c_{1}\right)\right)}\right)\\
 & =\mathbb{P}\left(\left(a_{1},b_{1},c_{1}\right)=\left(a,b,c\right),\left(a_{3},b_{3},c_{3}\right)=\left(a,b,c\right)\right)\mathbb{P}\left(\left|\left(a+\frac{b_{2}c}{a_{2}}\right)\left(a_{2}+\frac{bc_{2}}{a}\right)\right|=\left|a+\frac{bc}{a}\right|^{2}\right)\\
 & =\alpha^{2}\mathbb{P}\left(\left|\left(a+\frac{b_{2}c}{a_{2}}\right)\left(a_{2}+\frac{bc_{2}}{a}\right)\right|=\left|a+\frac{bc}{a}\right|^{2}\right)
\end{align*}
so that
\[
\left|\left(a+\frac{b_{2}c}{a_{2}}\right)\left(a_{2}+\frac{bc_{2}}{a}\right)\right|=\left|a+\frac{bc}{a}\right|^{2},\,\,\,\,a.s.
\]
or
\[
\left(a+\frac{b_{2}c}{a_{2}}\right)^{2}\left(a_{2}+\frac{bc_{2}}{a}\right)^{2}=\left(a+\frac{bc}{a}\right)^{4},\,\,\,\,a.s.
\]
\end{proof}

\section{Examples: Exact Results}\label{Sec:Examples}

We use the results in Theorem \ref{thm:Main} to compute the exact
Lyapunov exponent $\lambda$ and variance $\sigma^{2}$ for the central
limit theorem for the products of the following random matrices.


\begin{example}[Binary]Let $Y_{j}=\left[\begin{array}{cc}
x_{j} & \frac{1}{x_{j}}\\
1 & \frac{1}{x_{j}^{2}}
\end{array}\right]$ where $x_{j}$ are independent and identically distributed random
variables taking values $a,b\neq0,-1$ and $\left(ab^{2}+1\right)\left(a^{2}b+1\right)\neq0$
with $\mathbb{P}\left(x_{j}=a\right)=p=1-\mathbb{P}\left(x_{j}=b\right)$. Then the Lyapunov exponent is given by 
\[
\lambda=p^{2}\log\left|a+\frac{1}{a^{2}}\right|+p\left(1-p\right)\log\left(\left|a+\frac{1}{b^{2}}\right|\left|b+\frac{1}{a^{2}}\right|\right)+\left(1-p\right)^{2}\log\left|b+\frac{1}{b^{2}}\right|
\]
and the variance is 
\begin{align*}
 & \sigma^{2}=p^{2}\left(1+\left(2-3p\right)p\right)\left(\log\left|a+\frac{1}{a^{2}}\right|\right)^{2}\\
 & -2\left(1-p\right)p^{2}\log\left|a+\frac{1}{a^{2}}\right|\left(\left(3p-1\right)\log\left(\left|a+\frac{1}{b^{2}}\right|\left|b+\frac{1}{a^{2}}\right|\right)+3\left(1-p\right)p\log\left(\left|b+\frac{1}{b^{2}}\right|\right)\right)\\
 & +\left(1-p\right)\left(1+3\left(p-1\right)p\right)p\left(\log\left(\left|a+\frac{1}{b^{2}}\right|\left|b+\frac{1}{a^{2}}\right|\right)\right)^{2}\\
 & +2\left(1-p\right)^{2}\left(3p-2\right)p\log\left(\left|a+\frac{1}{b^{2}}\right|\left|b+\frac{1}{a^{2}}\right|\right)\log\left|b+\frac{1}{b^{2}}\right|-\left(1-p\right)^{2}\left(3p-4\right)p\left(\log\left|b+\frac{1}{b^{2}}\right|\right)^{2}.
\end{align*}
\end{example}


In the next few examples we consider the non-invertible matrix 
\[
Y_{j}=\left[\begin{array}{cc}
x_{j} & x_{j}\\
y_{j} & y_{j}
\end{array}\right],
\]
which allow for a simplified computation of exact values. In particular,
using Theorem \ref{thm:Main} we have that 
\[
\lambda=\mathbb{E}\left[\left(\log\left|x_{1}+y_{1}\right|\right)\right].
\]
Note that by independence we have
\begin{align*}
\mathbb{E}\left[\log\left|a_{1}+\frac{b_{2}c_{1}}{a_{2}}\right|\log\left|a_{2}+\frac{b_{3}c_{2}}{a_{3}}\right|\right] & =\mathbb{E}\left[\log\left|x_{1}+y_{1}\right|\log\left|x_{2}+y_{2}\right|\right]=\lambda^{2}
\end{align*}
which allows for the simplification of the variance formula 
\[
\sigma^{2}=\mathbb{E}\left[\left(\log\left|x_{1}+y_{1}\right|\right)^{2}\right]-\lambda^{2}.
\]

\begin{example}[Uniform]
 Let $Y_{j}=\left[\begin{array}{cc}
x_{j} & x_{j}\\
y_{j} & y_{j}
\end{array}\right]$ where $x_{j},y_{i}$ are all independent and identically distributed
over the interval $\left[-a,b\right]$ where $-a\leq0<b$. Then the
Lyapunov exponent and variance is given by 
\[
\lambda=\begin{cases}
2\log2-\frac{3}{2}+\log b & ,a=0,b>0\\
2\log2-\frac{3}{2}+\log a & ,-a<0,b=0\\
\log\left(2b\right)-\frac{3}{2} & ,-a<0<b,a=b
\end{cases}\,\,\,\,\,\,\text{and }\,\,\,\,\,\,\sigma^{2}=\begin{cases}
\frac{5}{4}-2\left(\log2\right)^{2} & ,a=0,b>0\\
\frac{5}{4}-2\left(\log2\right)^{2} & ,-a<0,b=0\\
\frac{5}{4} & ,-a<0<b,a=b
\end{cases}.
\]
\end{example}


\begin{example}[Exponential] Let $Y_{j}=\left[\begin{array}{cc}
x_{j} & x_{j}\\
y_{j} & y_{j}
\end{array}\right]$ where $x_{j},y_{i}$ are all independent and identically distributed
as exponential random variables with parameter $\theta>0$. Then 
\[
\lambda=1-\gamma-\log\theta\,\,\,\,\,\,\,\,\,\text{and }\,\,\,\,\,\,\,\,\,\sigma^{2}=\frac{\pi^{2}}{6}-1,
\]
where $\gamma\approx.57721$ is the Euler-Mascheroni constant. 
\end{example}


\begin{example}[Cauchy] Let $Y_{j}=\left[\begin{array}{cc}
x_{j} & x_{j}\\
y_{j} & y_{j}
\end{array}\right]$ where $x_{j},y_{i}$ are all independent and identically distributed
as a standard Cauchy distribution. Then 
\[
\lambda=\log2\,\,\,\,\,\,\,\,\,\text{and }\,\,\,\,\,\,\,\,\,\sigma^{2}=\frac{\pi^{2}}{4}.
\]
\end{example}

\begin{acknowledgement}
This work was supported in part by a National Science Foundation  grant in the Division of Mathematical Sciences, Award \#2316968.
\end{acknowledgement}

\begin{statements}
The authors have no relevant financial or non-financial interests to disclose.
\end{statements}

\bibliographystyle{plain}	
\bibliography{matrix_bib}

\end{document}